\newtheorem{defn}[equation]{Definition}
\newtheorem{lem}[equation]{Lemma}
\newtheorem{thm}[equation]{Theorem}
\newtheorem{cor}[equation]{Corollary}
\newtheorem{remark}[equation]{Remark}
\newcommand{\sg}{\leqslant}
\newcommand{\F}{M}
\newcommand{\lff}{L}
\newcommand{\E}{M}
\newcommand{\lee}{L}
\newcommand{\arxiv}[1]{\href{http://arxiv.org/abs/#1}{\texttt{arXiv:#1}}}
\begin{document}
\title{Hurwitz generation in groups\\ of types $F_4$, $E_6$, $^2E_6$, $E_7$ and $E_8$}
\author{Emilio Pierro}
\address{\textnormal{Department of Mathematics,\\
London School of Economics and Political Science,\\
Houghton Street,\\
London, WC2A 2AE}}

\date{\today}
\maketitle

\begin{abstract}
A Hurwitz generating triple for a group $G$ is an ordered triple of elements $(x,y,z) \in G^3$ where $x^2=y^3=z^7=xyz=1$ and $\langle x,y,z \rangle = G$.
For the finite quasisimple exceptional groups of types $F_4$, $E_6$, $^2E_6$, $E_7$ and $E_8$, we provide restrictions on which conjugacy classes $x$, $y$ and $z$ can belong to if $(x,y,z)$ is a Hurwitz generating triple.
We prove that there exist Hurwitz generating triples for $F_4(3)$, $F_4(5)$, $F_4(7)$, $F_4(8)$, $E_6(3)$ and $E_7(2)$, and that there are no such triples for $F_4(2^{3n-2})$, $F_4(2^{3n-1})$, $E_6(7^{3n-2})$, $E_6(7^{3n-1})$, $SE_6(7^n)$ or $^2E_6(7^n)$ when $n \geq 1$.
\end{abstract}

\section{Introduction}
It was proven by Hurwitz \cite{hurwitz} that if $G$ is a group of orientation-preserving isometries of a compact Riemann surface of genus $g$, then the order of $G$ is bounded above by $84(g-1)$; groups which attain this bound are known as Hurwitz groups.
The question of which groups are Hurwitz can be translated purely into the language of group theory by the following, since the above is equivalent to $G$ being a finite quotient of the Fuchsian group
\[\Delta := \Delta(2,3,7) = \langle x,y,z \mid x^2 = y^3 = z^7 = xyz = 1 \rangle.\]
A Hurwitz triple in a group $G$ is an ordered triple $(x,y,z)$ of elements $x,y,z \in G$ such that $o(x)=2$, $o(y)=3$, $o(z)=7$, $xyz=1$.
A Hurwitz generating triple is a Hurwitz triple in $G$ such that $\langle x,y \rangle = G$.
We say that $G$ is a Hurwitz group if it admits a Hurwitz generating triple.
We refer the reader to two the most recent survey articles \cite{cond2,MR2523424} which survey the landscape of this problem.
Since quotients of $\Delta$ are perfect, a natural reduction is to consider which non-abelian finite quasisimple groups are Hurwitz.
Recall that a group $G$ is quasisimple if $G/Z(G)$ is simple.
By the Classification of Finite Simple Groups, the non-abelian finite simple groups fall into three families: the alternating groups, the finite simple groups of Lie type (divided into the classical and the exceptional groups) and a finite family of sporadic groups.

\smallskip

The determination of which alternating groups are Hurwitz was completed by Conder \cite{condhur}; for the sporadic groups, the problem was completed by Wilson \cite{hurwitzm} with a large contribution by Woldar \cite{sporhur}.
For classical groups, the problem is understandably quite broad and we mention only a handful of results in this area.
For small rank groups, a summary of all Hurwitz groups which are subgroups of $PGL_7(q)$ is given by Pellegrini and Tamburini in \cite{MR3400398}.
For large rank groups, it was shown by Lucchini, Tamburini and Wilson that $SL_n(q)$ is Hurwitz for all $n \geq 287$ and all prime powers $q$ \cite{MR1745399}.
In between, the picture is a lot more patchy and we simply mention the articles of Vsemirnov \cite{MR2118177} and Vincent and Zalesski \cite{MR2291680} which tackle various classical groups in dimension $7 \leq n \leq 287$.

\smallskip

In the case of the exceptional groups, the groups $^2B_2(q)$ have order coprime to $3$ and so are never Hurwitz groups.
The status of all members of the families $^2G_2(q)'$, $^2F_4(q)'$, $G_2(q)'$ and $^3D_4(q)$ is known \cite{rr, malle,mallesmallhurwitz}.
Their proofs are based primarily on structure constant arguments; similar arguments can be used to show that $F_4(2)$ is not a Hurwitz group, as can be found in earlier work of the author \cite{pierroe62} where it is also shown that $E_6(2)$ is not a Hurwitz group.
It was first shown by Norton in unpublished work, also using structure constants, that $^2E_6(2)$ is a Hurwitz group.
We draw the interested reader's attention to the seemingly less well-known work of Tchakerian \cite{MR2952446} where beautiful arguments involving Chevalley generators are used are used to produce explicit Hurwitz generators for the groups $G_2(3^n)$.

\smallskip

This leaves the remaining exceptional groups of types $F_4$, $E_6$, $^2E_6$, $E_7$ and $E_8$ to which we now turn.
Our original motivation for this paper was a result of Larsen, Lubotzky and Marion \cite[Corollary 1.5]{MR3254328} which suggested there may exist an infinite family of groups of type $E_6$ which are not Hurwitz.
We shall also consider the non-simple quasisimple groups $SE_6$, $^2SE_6(q)$ and $SE_7(q)$.
Our approach, as in the aforementioned papers dealing with the classical groups, is to use the following specialisation of a theorem of Scott \cite{scottcoho}.
\begin{thm}[Scott]\label{scott}
Let $\rho \colon G \to GL(n,V)$ be a representation of $G$ and let $d_G^V$ denote the dimension of the fixed point space of $G$ in $V$.
Let $G^*$ denote the dual representation of $G$ on $V^*$, the dual of $V$.
Let $x$, $y$, $z \in G$ be such that $\langle x,y \rangle = G$ and $xyz =1$.
Then
\[d_x^V + d_y^V + d_z^V \leq n+d_G^V+d_{G^*}^V.\]
\end{thm}
In our case, we shall consider both the standard and adjoint representations of the exceptional groups in question.
Our aim is to then determine so-called ``admissible'' triples of conjugacy classes for each group which we define as follows.

\begin{defn}
Let $G$ be a group and let $x,y,z \in G$ such that $\langle x,y \rangle =G$ and $xyz=1_G$.
If there exists a representation $\rho:G \to GL(n,V)$ such that the bound in Theorem \ref{scott} is not satisfied, then we say that $(x,y,z)$ is \emph{not admissible}.
Otherwise, we say that $G$ is \emph{admissible}.
\end{defn}

\smallskip

Our main theorem is the following.

\begin{thm} \label{main}
The groups $F_4(2^{3n-2})$, $F_4(2^{3n-1})$, $E_6(7^{3n-2})$, $E_6(7^{3n-1})$, $SE_6(7^n)$ and ${}^2E_6(7^n)$, where $n \geq 1$, are not Hurwitz groups.
\end{thm}

We are also able to determine the following.

\begin{thm} \label{main}
Let $G$ be isomorphic to one of $F_4(q)$, $E_6(q)$, $SE_6(q)$, $^2E_6(q)$, $^2SE_6(q)$, $E_7(q)$, $SE_7(q)$ or $E_8(q)$.
If there exists an admissible Hurwitz triple $(x,y,z)$ for $G$, then $G$ and the conjugacy classes to which $x$, $y$ and $z$ belong appear in Tables \ref{f4admiss}, \ref{e6admiss}, \ref{e7admiss} and \ref{e8admiss}.
\end{thm}

Having identified admissible Hurwitz triples, in a handful of cases we are able to explicitly find Hurwitz generating triples and we prove the following.

\begin{thm}\label{new}
The groups $F_4(3)$, $F_4(5)$, $F_4(7)$, $F_4(8)$, $E_6(3)$ and $E_7(2)$ are Hurwitz groups.
\end{thm}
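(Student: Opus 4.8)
The plan is to exhibit, for each of the six groups, an explicit pair of elements $x,y$ of orders $2$ and $3$ whose product has order $7$ and which together generate the group. By Theorem \ref{main} and Table \ref{admiss}, the admissible class choices are already pinned down in each case (e.g.\ for $F_4(3)$ one needs $x \in 2A$, $y \in A_2+\tilde A_1$ and $z \in 7N$, or the other listed option), so the remaining work is purely existential: one must verify that some admissible triple actually generates. The natural route is a computational one, working inside a convenient faithful matrix or permutation representation of $G$ over the relevant finite field --- for instance the $26$- or $25$-dimensional module for $F_4(q)$, the $27$-dimensional module for $SE_6(3)$, and the minimal $56$-dimensional module for $E_7(2)$ --- using the data in the literature (the online Atlas representations, or Chevalley/Steinberg generators) and a computer algebra system such as \textsf{GAP} or \textsf{Magma}.

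The key steps, in order, are as follows. First I would fix a faithful representation $\rho \colon G \to GL(V)$ with $G$ acting on $V$ over the field $\mathbb{F}_q$, using known generators so that $G = \langle a, b \rangle$ is available concretely. Second, I would search for elements $x$ of order $2$ and $y$ of order $3$ lying in the admissible classes: one identifies conjugacy classes by computing characteristic polynomials, traces on $V$ and on $\wedge^2 V$ or the adjoint module, and element orders, comparing against the class data underlying Table \ref{admiss}. Third, among the pairs found I would impose $o(xy) = 7$ --- this is cheap to check --- and then, fourth, verify $\langle x, y \rangle = G$; for this I would use order considerations together with the classification of maximal subgroups of the relevant $G$ (e.g.\ Wilson's list for $E_7(2)$, and the maximal-subgroup information for $F_4$ and $E_6$ over small fields) to rule out every maximal overgroup. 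A standard way to make the maximal-subgroup check efficient is to compute the order of $\langle x, y \rangle$ (or a lower bound for it via random elements and their orders), and to exhibit an element order or a composition-factor invariant not occurring in any proper subgroup.

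A mild subtlety concerns the central extensions: for $E_6(3)$ one really wants a Hurwitz triple in $SE_6(3)$ (since a Hurwitz triple in the simple group lifts, as $\Delta$ is perfect and $Z(SE_6(3))$ has order coprime to $|{\Delta}^{\mathrm{ab}}| = 1$, so there is no obstruction), and similarly $E_7(2)$ is already its own covering group over $\mathbb{F}_2$ in the sense relevant here; one should record that the element $z = (xy)^{-1}$ genuinely has order $7$ and not $7$ times a central element, which is automatic since the centre has order coprime to $7$ in these cases. The main obstacle I expect is the generation verification rather than the existence of a triple with the right element orders: for the larger groups, notably $E_7(2)$ on a $56$-dimensional module over $\mathbb{F}_2$, one must be careful that the subgroup generated is not contained in one of the geometric maximal subgroups (parabolics, or subsystem subgroups such as $A_7$, $A_1 D_6$, etc.), and discharging all of these may require either a lucky choice of $(x,y)$ with an element of order not dividing $|H|$ for every maximal $H$, or a direct constructive recognition computation. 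Once a single generating admissible triple is found for each of the six groups, the theorem follows.
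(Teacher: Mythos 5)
Your proposal is essentially the paper's own argument: a Magma random search for triples in the admissible classes of Table \ref{admiss} (identified via fixed-space dimensions on the minimal/quotient modules), with $o(xy)=7$ checked directly, and generation certified by exhibiting element orders whose least common multiple divides the order of no maximal subgroup, using the known or candidate maximal-subgroup lists (Magaard and Norton--Wilson for $F_4$, Liebeck--Seitz for $F_4(3)$ and $E_6(3)$ where no complete classification exists, and Ballantyne--Bates--Rowley for $E_7(2)$). The only difference is one of detail rather than method: the paper actually carries out the searches and records the specific element orders used (e.g.\ $(q^4-1)/2$, $q^4+1$, $q^4-q^2+1$ for $F_4(q)$), whereas your write-up leaves these computations as a plan.
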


Unfortunately, the group $E_6(7^3)$ was much too large for us to perform a random search in, and so we are unable to prove or disprove its status as a Hurwitz group.

\subsection{Notation}
Unless otherwise specified, $x$, $y$ and $z$ will refer to elements of orders 2, 3 and 7 respectively.
Conjugacy classes of unipotent elements will be denoted by their Carter notation as in \cite{lawjor}, from where our data is obtained.
Conjugacy classes of semisimple elements will be denoted by Atlas \cite{atlas} notation i.e. $2A$, $3B$, etc.
We abuse notation and terminology by referring to a conjugacy class when we may mean a family of conjugacy classes as follows.
E.g. if $x$ and $x^k \neq x$ have the same order, but belong to two different conjugacy classes, we shall refer to ``the conjugacy class of $x$'', where for the purpose of our investigation, $x$ and $x^k$ perform the same role.
Where we write for brevity, for example, that $(A_7,3A/3D,7F/7K)$ is an admissible Hurwitz generating triple for $E_7(p^n)$ where $p \neq 2,3,7$, we mean that each of the triples $(A_7,3A,7F)$, $(A_7,3A,7K)$, $(A_7,3D,7F)$ and $(A_7,3D,7K)$ are admissible triples.

\section{Admissible Hurwitz triples in $F_4(q)$}
In this section we determine admissible Hurwitz generating triples for the exceptional groups of type $F_4(q)$.
We consider both a $26$-dimensional minimal representation $M$ and the $52$-dimensional adjoint representation $L$ of $F_4(q)$.
In characteristic $p \geq 5$, both of these representations are irreducible, and hence their fixed point spaces are zero dimensional.
In characteristic 2, the minimal representation is irreducible but the adjoint representation is not: it splits into the direct sum of two non-isomorphic 26-dimensional s, interchanged by an exceptional outer automorphism.
Nevertheless, this representation still has zero dimensional fixed point space.
In characteristic 3, the adjoint representation is irreducible, but now there are two non-isomorphic minimal representations, dual to one another.
The sum of their fixed point spaces is equal to 1, and so in characteristic 3, $n + d_G^M + d_{G^*}^M = 27$.
Otherwise $n + d_G^M + d_{G^*}^M = 26$ and $n + d_G^L + d_{G^*}^L = 52$.
Since the dimension of the fixed point space of an element is the same across non-isomorphic representations, we may abuse notation and refer to ``the'' minimal representation of $G$.

\subsection{Conjugacy classes in $F_4(q)$}
The conjugacy classes of unipotent elements and their Jordan block structure on the minimal and adjoint representations can be found in Tables 3 and 4 in \cite{lawjor} respectively.
From there it is routine to determine the dimensions of their fixed point spaces.
For the conjugacy classes of semisimple elements and their fixed point space dimensions we reproduce the information given in \cite[Table 2]{cowa} which holds for semisimple elements over fields of characteristic $p>0$.
For consistency, we maintain their notation.
We gather all of this information in Table \ref{f4cc}.
For reference, elements from the classes $2A$ and $2B$ are represented by $t_1$ and $t_4$ respectively in \cite[Table 4.5.1]{gls3}; elements from the classes $3A$, $3C$ and $3D$ are represented by $t_1$, $t_2$ and $t_4$ respectively in \cite[Table 4.7.3A]{gls3}.

\begin{table}[h!]\centering
\begin{tabular}{c | c c}
$x$				&$d_x^\F$&$d_x^\lff$\\
\hline
$A_1$			&20		&36\\
$\tilde{A_1}$		&16		&36\\
$\tilde{A_1}^{(2)}$	&16		&31\\
$A_1 + \tilde{A_1}$	&14		&28\\
\hline
$2A$				&14		&24\\
$2B$				&10		&36
\end{tabular}
\quad\quad
\begin{tabular}{c | c c}
$y$				&$d_y^\F$&$d_y^\lff$\\
\hline
$A_1$			&20		&36\\
$\tilde{A_1}$		&16		&30\\
$A_1 + \tilde{A_1}$	&14		&24\\
$A_2$			&14		&22\\
$\tilde{A_2}$		&9		&22\\
$A_2 + \tilde{A_1}$	&10		&18\\
$\tilde{A_2} + A_1$	&9		&18\\
\hline
$3A$				&14		&22\\
$3C$			&8		&16\\
$3D$			&8		&22
\end{tabular}
\quad\quad
\begin{tabular}{c | c c}
$z$				&$d_z^\F$&$d_z^\lff$\\
\hline
$A_1$			&20		&36\\
$\tilde{A_1}$		&16		&30\\
$A_1 + \tilde{A_1}$	&14		&24\\
$A_2$			&14		&22\\
$\tilde{A_2}$		&8		&22\\
$A_2 + \tilde{A_1}$	&10		&18\\
$B_2$			&10		&16\\
$\tilde{A_2} + A_1$	&8		&16\\
$C_3(a_1)$		&8		&14\\
$F_4(a_3)$		&8		&12\\
$B_3$			&8		&10\\
$C_3$			&4		&10\\
$F_4(a_2)$		&4		&8\\
\end{tabular}
\quad\quad
\begin{tabular}{c | c c}
$z$				&$d_z^\F$&$d_z^\lff$\\
\hline
$7A$				&14		&22\\
$7H$			&6		&12\\
$7I$				&8		&12\\
$7J$				&8		&10\\
$7L$				&4		&12\\
$7N$			&2		&10\\
$7O$			&4		&8\\
$7Q$			&8		&22
\end{tabular}\caption{\label{f4cc} Conjugacy classes of elements in $F_4(q)$}
\end{table}

\subsection{Admissible triples}
We are now ready to prove the following.

\begin{lem}
Let $G \cong F_4(2^n)$.
If $(x,y,z)$ is an admissible Hurwitz triple for $G$, then it is of type $(A_1 + \tilde{A_1},3C,7O)$.
Moreover, $n$ is divisible by $3$.
\end{lem}
\begin{proof}
First we consider the restrictions on $x$ given by the adjoint representation.
Since $d_y^L + d_z^L \geq 16 + 8 = 24$, it follows that $d_x^L \leq 52-24=28$ and so $x \in A_1 + \tilde{A_1}$.
Since $d_x^L=28$ in this case, it follows that $d_y^L=16$ and $d_z^L=8$ forcing $y \in 3C$ and $z \in 7O$.
Finally, the class $7O$ only appears when $n$ is divisible by $3$ \cite{shinf42}, completing the proof.
\end{proof}

The following lemma will facilitate the proofs in the remaining characteristics.

\begin{lem}\label{f42ss}
Let $G \cong F_4(p^n)$, where $p \neq 2$.
If $(x,y,z)$ is an admissible Hurwitz triple for $G$, then $x \in 2A$.
\end{lem}
\begin{proof}
We consider the adjoint representation of $G$.
Across all characteristics $p \geq 3$ we have the bounds $d_y^L \geq 16$ and $d_z^L \geq 8$ and so $d_x \leq 52-16-8=28$.
Hence $x \in 2A$.
\end{proof}

We now turn to characteristic 3.

\begin{lem}
Let $G \cong F_4(3^n)$.
If $(x,y,z)$ is an admissible Hurwitz triple for $G$, then it is of type $(2A, A_2 + \tilde{A_1},7N)$ or $(2A,\tilde{A_2} + A_1,7N/7O)$.
\end{lem}
\begin{proof}
By Lemma \ref{f42ss}, $x \in 2A$.
Then, by considering the adjoint representation, $d_y^L \leq 52-24-8 = 20$ and so $y \in A_2 + \tilde{A_1}$ or $\tilde{A_2} + A_1$.
Similarly, $d_z^L \leq 52-24-18=10$ and so $z \in 7J$, $7N$ or $7O$.
We now consider the minimal representation where $d_x^M+d_y^M+d_z^M \leq 27$, since we are in characteristic 3.
If $y \in A_2 + \tilde{A_1}$, then $d_z^M \leq 27-14-10=3$ and so $z \in 7N$.
If $y \in \tilde{A_2} + A_1$, then $d_z^M \leq 27-14-9=4$ and so $z \in 7N$ or $7O$.
\end{proof}

The following lemma also facilitates the proof in the remaining characteristics.

\begin{lem}
Let $G \cong F_4(p^n)$ where $p \geq 5$.
If $(x,y,z)$ is an admissible Hurwitz triple for $G$, then $y \in 3C$.
\end{lem}
\begin{proof}
By Lemma \ref{f42ss} we know that $x \in 2A$.
Across all characteristics we have the bound $d_z^L \geq 8$ and so $d_y^L \leq 52-24-8=20$.
Hence $y \in 3C$.
\end{proof}

\begin{cor}
Let $G \cong F_4(p^n)$ where $p \geq 5$ and let $(x,y,z)$ be an admissible Hurwitz triple for $G$.\begin{enumerate}
\item If $p=7$, then $(x,y,z)$ is of type $(2A,3C,C_3/F_4(a_2))$.
\item If $p \neq 7$, then $(x,y,z)$ is of type $(2A,3C,7L/7N/7O)$.
\end{enumerate}
\end{cor}
\begin{proof}
By the preceding lemmas, $x \in 2A$ and $y \in 3C$ and hence $d_z^L \leq 52-24-16=12$ and $d_z^M \leq 26 - 14-8 = 4$.
We see that the only conjugacy classes of elements of order 7 satisfying these bounds are the unipotent classes $C_3$, $F_4(a_2)$ and the semisimple classes $7L$, $7N$ and $7O$.
\end{proof}

Finally, we summarise the various admissible triples determined in this section according to their characteristic in Table \ref{f4admiss}.
We also note that the classes $7L$, $7N$ and $7O$ only appear when $p^n \equiv \pm1 \mod 7$ \cite{shoji}.

\begin{table}[h!]\centering
\label{admiss}
\begin{tabular}{c | c c c | c c c}
$G$				&$x$						&$y$					&$z$				&Condition\\ \hline
$F_4(2^n)$		&$A_1 + \tilde{A_1}$			&$3C$				&$7O$			&$2^n \equiv \pm 1 \; (7)$\\
\hline
$F_4(3^n)$		&$2A$					&$A_2 + \tilde{A_1}$		&$7N$\\
				&$2A$					&$\tilde{A_2} + A_1$		&$7N$\\
				&$2A$					&$\tilde{A_2} + A_1$		&$7O$			&$3^n \equiv \pm 1 \; (7)$\\
\hline
$F_4(7^n)$		&$2A$					&$3C$				&$C_3, \; F_4(a_2)$\\
\hline
$F_4(p^n)$		&$2A$					&$3C$				&$7N$\\
$p \neq 2,3,7$		&$2A$					&$3C$				&$7L, \; 7O$		&$p^n \equiv \pm 1 \; (7)$
\end{tabular}\caption{\label{f4admiss} Admissible Hurwitz  triples for the groups $F_4(q)$}
\end{table}

\section{Admissible Hurwitz triples in $E_6^\epsilon(q)$ and $SE_6^\epsilon(q)$}
In this section we treat the cases where $G$ is isomorphic to $E_6(q)$, $^2E_6(q)$, $SE_6(q)$ or $^2SE_6(q)$.
As is common, we write $E_6^\epsilon(q)$ where $\epsilon = 1$ designates the untwisted group and $\epsilon=-1$ designates the twisted group.
Similarly, for $SE_6^\epsilon(q)$.
The centre of $SE_6^\epsilon(q)$ has order $d = (3,q-\epsilon)$.

\smallskip

The groups $SE_6^\epsilon(q)$ have a minimal representation $M$ of dimension 27 which is irreducible in all characteristics; the adjoint representation $L$ of $G$ has dimension 78 and is irreducible except in characteristic 3 where $d_G^L + d_{G^*}^L=1$.

\subsection{Conjugacy classes}
We now turn to the conjugacy classes of the various versions of these groups.
For the unipotent case we refer to \cite[Tables 5 \& 6]{lawjor}; for the semisimple elements of $E_6^\epsilon(q)$ we refer to \cite[Tables 4.5.1 \& 4.7.3A]{gls3}, whose notation we refer to as the ``GLS Class'', and for $SE_6^\epsilon(q)$ we refer to \cite[Table 2]{cowa}, whose notation we follow.

\smallskip

In characteristic $p\neq3$ the number of semisimple classes of elements of order 3 varies according to the version of the group and the congruence of $q$ modulo 9.
There are four cases to consider which we describe now and summarise in Table \ref{e6uss3}.
For the remainder of this subsection we let $q \equiv \epsilon \mod 3$.
The cases are then:
\begin{enumerate}
\item $G \cong SE_6^\epsilon(q)$,
\item $G \cong E_6^\epsilon(q)$ where $q \equiv \epsilon \mod 9$,
\item $G \cong E_6^\epsilon(q)$ where $q$ is not congruent to $\epsilon \mod 9$,
\item $G \cong E_6^{-\epsilon}(q) \cong SE_6^{-\epsilon}(q)$.
\end{enumerate}

In case (1) $G$ has centre of order 3 and has five non-central conjugacy classes of elements of order 3: the elements of the class $3B$ are not conjugate to their inverses, so we take one of these classes as their representatives.

In case (2) $G$ is the quotient of a group in case (1) by its centre.
The elements from classes $3A$ and $3B$ in case (1) are fused and we denote the image of this class in $G$ as $3A$.
The class consisting of the images of elements from $3C$ is denoted $3C$ and similarly for the class $3D$.
In addition, there are four conjugacy classes of elements of order $3$ whose preimages in $SE_6^\epsilon(q)$ have order 9.
These elements are not conjugate to their inverses and so we only need to consider two additional classes: $3E$ and $3F$.

In case (3) we again have the images of the classes appearing in case (1) which we again denote $3A$, $3C$ and $3D$.
The additional classes appearing in (2) now belong to Aut$(G) \setminus$Inn$(G)$ \cite[Table 4.7.3A]{gls3} and so there are no additional classes.

In case (4) we have three conjugacy classes of elements of order 3 which we denote $3A$, $3C$ and $3D$ maintaining the consistency with their GLS notation.

\begin{table}[h!]\centering
\begin{tabular}{c | c c | c}
$x$			&$d_x^\E$	&$d_x^\lee$	&GLS\\
\hline
$A_1$		&21			&56\\
$2A_1$		&17			&46\\
$3A_1$		&15			&40\\
\hline
$2A$			&15			&38			&$t_2$\\
$2B$			&11			&46			&$t_1$
\end{tabular}
\quad
\begin{tabular}{c | c c}
$y$				&$d_y^\E$	&$d_y^\lee$\\
\hline
$A_1$			&21			&56\\
$2A_1$			&17			&46\\
$3A_1$			&15			&38\\
$A_2$			&15			&36\\
$A_2 + A_1$		&12			&32\\
$2A_2$			&9			&31\\
$A_2 + 2A_1$		&11			&28\\
$2A_2 + A_1$		&9			&27\\
\end{tabular}
\quad
\begin{tabular}{cc|cc|c|c}
$SE_6$		&			&$E_6$		&			&GLS\\
Class		&$d_y^M$	&Class		&$d_y^L$		&Class		&Condition\\
\hline
$3A$			&15			&$3A$		&36			&$t_4$\\
$3B$			&6			&''			&''			&"\\
$3C$		&9			&$3C$		&24			&$t_3$\\
$3D$		&9			&$3D$		&30			&$t_{1,6}$\\
--			&--			&$3E$		&46			&$t_1^{\pm1}$	&$q \equiv \epsilon \; (9)$\\
--			&--			&$3F$		&28			&$t_6^{\pm1}$	&$q \equiv \epsilon \; (9)$
\end{tabular}
\caption{\label{e6uss3} Conjugacy classes of elements of order $3$ in $E_6^\epsilon(q)$ and $SE_6^\epsilon(q)$, $q \equiv \epsilon \mod 3$}
\end{table}

By contrast, the unipotent conjugacy classes of elements of order 3 and the conjugacy classes of elements of orders 2 and 7 are much easier to describe.
As usual, we reproduce the information for the unipotent classes from \cite[Tables 5 \& 6]{lawjor} and for the semisimple classes of elements of orders 2 and 7 from \cite{cowa}.
Elements from the classes $2A$ and $2B$ have GLS notation $t_2$ and $t_1$ respectively.
These appear in Table \ref{e627}.

\begin{table}[h!]\centering
\begin{tabular}{c | c c || c | c c c c}
$z$				&$d_z^\E$	&$d_z^\lee$	&$z$				&$d_z^\E$	&$d_z^\lee$\\
\hline
$A_1$			&21			&56			&$A_3 + A_1$		&9			&22\\
$2A_1$			&17			&46			&$D_4(a_1)$		&9			&20\\
$3A_1$			&15			&38			&$A_4$			&7			&18\\
$A_2$			&15			&36			&$D_4$			&9			&18\\
$A_2 + A_1$		&12			&32			&$A_4 + A_1$		&6			&16\\
$2A_2$			&9			&30			&$A_5$			&5			&14\\
$A_2 + 2A_1$		&11			&28			&$D_5(a_1)$		&6			&14\\
$A_3$			&11			&26			&$E_6(a_3)$		&5			&12\\
$2A_2 + A_1$		&9			&24\\
\end{tabular}
\quad\quad\quad\quad
\begin{tabular}{c | c c || c | c c c c}
$z$			&$d_z^\E$	&$d_z^\lee$	&$z$			&$d_z^\E$	&$d_z^\lee$\\
\hline
$7A$			&15			&36			&$7J$		&9			&18\\
$7B$			&0			&28			&$7K$		&2			&16\\
$7C$		&5			&26			&$7L$		&5			&16\\
$7D$		&1			&26			&$7M$		&3			&14\\
$7E$			&2			&20			&$7N$		&3			&12\\
$7F$			&4			&20			&$7O$		&5			&12\\
$7G$		&4			&18			&$7P$		&0			&46\\
$7H$		&7			&18			&$7Q$		&9			&30\\
$7I$			&9			&20			&$7R$		&0			&30
\end{tabular}\caption{\label{e627} Conjugacy classes of elements of order 2 and 7 in $E_6^\epsilon(q)$, $\epsilon = \pm1$}
\end{table}

\subsection{Admissible triples}
We begin by determining which classes elements of orders 2 or 7 can belong to.
If a triple can be shown not to be admissible for the simple group, the preimages of this triple are not admissible for the quasisimple group as well.
Care must be taken that the order of an element in the pre-image is the same, but since we are able to rule out the classes $3E$ and $3F$, we see that this is true for all admissible classes.

\begin{lem}\label{e6x}
Let $G \cong SE_6^\epsilon(p^n)$ or $E_6^\epsilon(p^n)$ and let $(x,y,z)$ be an admissible Hurwitz triple for $G$.
\begin{enumerate}
\item If $p=2$, then $(x,y,z)$ is of type $(3A_1,3C,7M/7N)$.
\item If $p \neq 2$, then $x \in 2A$.
\end{enumerate}
\end{lem}
\begin{proof}
Consider the adjoint representation $L$ of $G$.
Across all characteristics $d_y^L \geq 24$ and $d_z^L \geq 12$.
If $p \neq 3$, then $d_x^L \leq 78 - 24 - 12 = 42$ and so $x \in 3A_1$ or $2A$ depending on whether $p=2$ or not.
If $p=3$, then $d_x^L \leq 79 - 24 - 12 = 43$ and so again $x \in 2A$.

Now suppose that $p=2$.
The adjoint representation applies in all cases, giving $d_y^L \leq 78 - 40 - 12 = 26$ and so $y \in 3C$.
If $G \cong E_6^\epsilon(2^n)$, then the preimage of $y$ in $SE_6^\epsilon(2^n)$ also belongs to the class $3C$ and so the bounds obtained from the minimal representation apply to all $G$.
Along with the bounds from the adjoint representation we obtain $d_z^M \leq 27 - 15 - 9 = 3$ and $d_z^L \leq 78 - 40 - 24 = 14$.
It follows then that $z \in 7M$ or $7N$.
Notice that the class $7N$ exists for all $n$, but the class $7M$ only exist when $p^n \equiv \epsilon \mod 7$.
\end{proof}

\begin{lem}\label{e6y}
Let $G \cong SE_6^\epsilon(p^n)$ or $E_6^\epsilon(p^n)$ and let $(x,y,z)$ be an admissible Hurwitz triple for $G$.
\begin{enumerate}
\item If $p=3$, then $(x,y,z)$ is of type $(2A,2A_2+A_1,7M/7N)$.
\item If $p \neq 3$, then $y \in 3C$ or $3F$.
\end{enumerate}
\end{lem}
\begin{proof}
Suppose first that $p=3$, so that $G \cong SE_6(3^n) \cong E_6(3^n)$.
In this case we can utilise both the minimal and adjoint representations.
By Lemma \ref{e6x} we know that $x \in 2A$ and so $d_z^M \leq 27 - 15 - 9 = 3$ and $d_y^L \leq 79 - 38 - 27 = 14$.
This means $z \in 7M$ or $7N$ and so $d_z^M \geq 3$.
Then, $d_y^M \leq 27 - 15 - 3 = 9$ and $d_y^L \leq 79 - 38 - 12 = 29$ and so $y \in 2A_2+A_1$.

Now suppose that $p \neq 3$ and consider the adjoint representation of $G$.
Since $d_x^L \geq 38$ we see that $d_y^L \leq 78-38-12=28$ and so $y \in 3C$ or $3F$.
\end{proof}

Remarkably, we are able to completely rule out the case that $G \cong SE_6(7^n)$ or $^2E_6(7^n)$.

\begin{lem}\label{se67}
If $G \cong SE_6(7^n)$ or $^2SE_6(7^n)$, then $G$ is not a Hurwitz group.
\end{lem}
\begin{proof}
Let $G$ be as in the hypothesis and, for a contradiction, assume that there is an admissible Hurwitz triple for $G$.
By Lemmas \ref{e6x} and \ref{e6y} $x \in 2A$ and $y \in 3C$ or $3F$.
The class $3F$ does not belong to $SE_6(7^n)$.
Letting $7^n \equiv \epsilon \mod 3$ we see that $\epsilon = 1$ for all $n$ and so $^2E_6(7^n) \cong E_6^{-\epsilon}(7^n)$ which again does not contain the class $3F$.
Hence $y \in 3C$.
Since $G \cong SE_6(7^n)$ or $^2SE_6(7^n)$ we can consider the minimal representation of $G$ in either case.
This yields $d_z^M \leq 27 - 15 - 9 = 3$ which is not satisfied by any $z$ in $G$.
Hence no such triple exists and $G$ is not a Hurwitz group.
\end{proof}

Since $^2SE_6(7^n) \cong {}^2E_6(7^n)$ it remains to consider the groups $E_6(7^n)$.

\begin{lem}
Let $G \cong E_6(7^n)$. If there exists an admissible Hurwitz triple for $G$, then it is of type $(2A,3F,E_6(a_3))$ and $n$ is divisible by $3$.
\end{lem}
\begin{proof}
Let $G \cong E_6(7^n)$ and suppose $(x,y,z)$ is an admissible Hurwitz triple for $G$.
By Lemmas \ref{e6x} and \ref{e6y} we know that $x \in 2A$ and $y \in 3C$ or $3F$.
If $y \in 3C$, then the preimages of $x$, $y$ and $z$ in $SE_6(7^n)$ will also be an admissible Hurwitz triple.
This contradicts Lemma \ref{se67} and so $y \in 3F$.
The class $3F$ only exists if $n$ is divisible by 3.
Finally, from the adjoint representation we see that $d_z^L \leq 78 - 38 - 28 = 12$ and so $z \in E_6(a_3)$.
\end{proof}

Finally, we turn to the restrictions of semisimple elements of order $7$.

\begin{lem}
Let $G \cong E_6^\epsilon(p^n)$ or $SE_6^\epsilon(p^n)$ where $p \neq 2,3,7$.
If $(x,y,z)$ is an admissible Hurwitz triple for $G$, then it is of type $(2A,3C,7K/7M/7N)$ or $(2A,3F,7N/7O)$.
\end{lem}
\begin{proof}
Let $G$ be as in the hypothesis and let $(x,y,z)$ be a Hurwitz generating triple for $G$.
By Lemmas \ref{e6x} and \ref{e6y} we know that $x \in 2A$ and $y \in 3C$ or $3F$.

Suppose first that $y \in 3F$ so that $G \cong E_6^\epsilon(p^n)$ and hence we can only utilise the adjoint representation of $G$.
Then $d_z^L \leq 78 - 38 - 28 = 12$ and so $z \in 7N$ or $7O$.
Next, if $y \in 3C$ we can utilise both the minimal and adjoint representations from which we find $d_z^M \leq 27 - 15 - 9 = 3$ and $d_z^L \leq 78 - 38 - 24 = 16$.
This yields $z \in 7K$, $7M$ or $7N$.
Note that the classes $7K$ and $7M$ only exist when $p^n \equiv \epsilon \mod 7$.
\end{proof}

We summarise the results of this section in Table \ref{e6admiss}.

\begin{table}[h!]\centering
\label{admiss}
\begin{tabular}{c | c c c | c c c}
$G$					&$x$			&$y$				&$z$						&Conditions\\ \hline
$E_6^\epsilon(2^n)$		&$3A_1$		&$3C$			&$7N$\\
					&''			&''				&$7M$					&$2^n \equiv \epsilon \; (7)$\\
\hline
$E_6^\epsilon(3^n)$		&$2A$		&$2A_2 + A_1$		&$7N$\\
					&''			&''				&$7M$					&$3^n \equiv \epsilon \; (7)$\\
\hline
$E_6(7^n)$			
					&2A			&$3F$			&$E_6(a_3)$				&$7^n \equiv 1 \; (9)$\\
\hline
$E_6^\epsilon(p^n)$		&$2A$		&$3C$			&$7N$\\
$p \neq 2,3,7$			&"			&"				&$7K, \; 7M$				&$p^n \equiv \epsilon \; (7)$\\
					&''			&$3F$			&$7N$					&$p^n \equiv \epsilon \; (9)$\\
					&''			&"				&$7O$					&$p^n \equiv \epsilon \; (63)$\\
\hline
\hline
$SE_6^\epsilon(2^n)$	&$3A_1$		&$3C$			&$7N$\\
					&''			&''				&$7M$					&$2^n \equiv \epsilon \; (7)$\\
\hline
$SE_6^\epsilon(p^n)$	&$2A$		&$3C$			&$7N$\\
$p \neq 2,3,7$			&"			&"				&$7K, \; 7M$				&$p^n \equiv \epsilon \; (7)$
\end{tabular}\caption{\label{e6admiss} Admissible Hurwitz triples for the groups $E_6(q)$}
\end{table}

\section{Admissible Hurwitz triples in $E_7(q)$ and $SE_7(q)$}
We now turn to the groups $E_7(q)$ and $SE_7(q)$.
The centre of $SE_7(q)$ has centre of order $d=(q-1,2)$.
Throughout this section we let $M$ denote the $56$-dimensional minimal representation of $SE_6(q)$, which is irreducible in all characteristics, and we let $L$ denote the $133$-dimensional adjoint representation $L$ of $E_6(q)$, which is irreducible in all characteristics, except in characteristic $2$ where $d_G^L + d_G^{L^*}=1$.

\subsection{Conjugacy classes}
We begin by determining the conjugacy classes of elements of order 2, 3, and 7 in $E_7(q)$ and $SE_7(q)$.
As in the case of $E_6$, the presence of a non-trivial centre means the conjugacy classes of semisimple involutions require a little care.

\smallskip

In the case of the unipotent elements, we turn again to \cite[Tables 7 \& 8]{lawjor} and present these, along with the dimensions of their fixed points spaces on $M$ and $L$ in Tables \ref{e72}, \ref{e73} and \ref{e7uni7}.
For the conjugacy classes of semisimple elements our references are \cite[Tables 4.5.1, 4.5.2 \& 4.7.3A]{gls3} and \cite[Table 7]{cohgri}.
We mostly use the notation of \cite{cohgri} but include the ``GLS Class'', referring to \cite[Table 4.5.1]{gls3}, in Tables \ref{e72}, \ref{e73} and \ref{e7ss7}.

\smallskip

We briefly describe the conjugacy classes of involutions in odd characteristic; the number of which will differ according to whether $|Z(G)|=1$ or $2$ \cite[Table 4.5.1 and 4.5.2]{gls3}.
In the case that $G \cong SE_7(q)$, there are two non-central conjugacy classes of involutions which we denote $2A$ and $2B$, both of which project onto the class $2A$ in $G/Z(G)$.
Following \cite{cohgri} we distinguish the classes by letting $2A$ consist of elements $x$ such that $d_x^M = 32$.
If $G \cong E_7(q)$, then there are three conjugacy classes of involutions.
To avoid confusion with the class $2B$, we refer to them as $2A$, $2C$ and $2D$.
Their representatives are denoted $t_1$, $t_4$ (or $t_4'$) and $t_7$ (or $t_7'$) respectively in \cite{gls3}.

\begin{table}[h!]
\begin{tabular}{c|cccc}
$x$		&$d_x^M$	&$d_x^L$\\ \hline
$A_1$	&44			&99\\
$2A_1$	&36			&81\\
$(3A_1)''$	&28			&80\\
$(3A_1)'$	&32			&71\\
$4A_1$	&28			&70
\end{tabular}
\quad\quad\quad\quad\quad\quad
\begin{tabular}{cc|cc|cc}
$SE_7$		&			&$E_7$		&			&GLS\\
Class		&$d_x^M$	&Class		&$d_x^L$		&Class\\\hline
$2A$			&32			&$2A$		&69			&$t_1$\\
$2B$			&24			&''			&''			&''\\
--			&--			&$2C$		&63			&$t_4$ or $t_4'$\\
--			&--			&$2D$		&79			&$t_7$ or $t_7'$	
\end{tabular}\caption{\label{e72} Unipotent and semisimple conjugacy classes of involutions in $E_7(q)$ and $SE_7(q)$}
\end{table}%
\begin{table}[h!]\centering
\begin{tabular}{c|cc||c|cc}
$y$			&$d_y^M$	&$d_x^L$		&$y$			&$d_y^M$	&$d_y^L$\\\hline
$A_1$		&44			&99			&$A_2+A_1$	&26			&57\\
$2A_1$		&36			&81			&$A_2+2A_1$	&24			&51\\
$(3A_1)''$		&27			&79			&$2A_2$		&20			&49\\
$(3A_1)'$		&32			&69			&$A_2+3A_1$	&21			&49\\
$A_2$		&32			&67			&$2A_2+A_1$	&20			&45\\
$4A_1$		&27			&63
\end{tabular}
\quad\quad\quad\quad\quad\quad
\begin{tabular}{c|cc|cc}
$y$			&$d_y^M$	&$d_y^L$	&GLS\\\hline
$3A$			&$14$		&$49$	&$t_4$\\
$3B$			&$2$			&$79$	&$t_7$\\
$3C$		&$20$		&$43$	&$t_2$\\
$3D$		&$20$		&$49$	&$t_6$\\
$3E$			&$32$		&$67$	&$t_1$
\end{tabular}\caption{\label{e73} Unipotent and semisimple conjugacy classes of elements of order $3$ in $E_7(q)$}
\end{table}%
\begin{table}[h!]
\begin{tabular}{c|cc||c|cc||c|ccccc}
$z$			&$d_z^M$	&$d_z^L$		&$z$				&$d_z^M$	&$d_z^L$		&$z$				&$d_z^M$	&$d_z^L$\\
\hline
$A_1$		&44			&99			&$(A_3+A_1)''$		&17			&47			&$D_4+A_1$		&15			&31\\
$2A_1$		&36			&81			&$2A_2+A_1$		&20			&43			&$A_4+A_1$		&14			&29\\
$(3A_1)''$		&27			&79			&$(A_3+A_1)'$		&20			&41			&$D_5(a_1)$		&14			&27\\
$(3A_1)'$		&32			&69			&$D_4(a_1)$		&20			&39			&$A_4+A_2$		&12			&27\\
$A_2$		&32			&67			&$A_3+2A_1$		&17			&39			&$(A_5)'$			&12			&25\\
$4A_1$		&27			&63			&$D_4$			&20			&37			&$A_5+A_1$		&9			&25\\
$A_2+A_1$	&26			&57			&$D_4(a_1)+A_1$	&16			&37			&$D_5(a_1)+A_1$	&12			&25\\
$A_2+2A_1$	&24			&51			&$A_3+A_2$		&16			&35			&$D_6(a_2)$		&9			&23\\
$A_3$		&34			&49			&$A_4$			&16			&33			&$E_6(a_3)$		&12			&23\\
$2A_2$		&20			&49			&$A_3+A_2+A_1$	&15			&33			&$E_7(a_5)$		&9			&21\\
$A_2+3A_1$	&21			&49			&$(A_5)''$			&9			&31			&$A_6$			&8			&19
\end{tabular}\caption{\label{e7uni7} Conjugacy classes of elements of order $7$ in $E_7(7^n)$}
\end{table}%

\begin{table}[h!]\centering
\begin{tabular}{c|cc||c|cc||c|cc||c|cc}
$z$		&$d_z^M$&$d_z^L$		&$z$		&$d_z^M$&$d_z^L$		&$z$		&$d_z^M$&$d_z^L$		&$z$		&$d_z^M$&$d_z^L$\\
\hline
$7A$		&$12$	&$37$		&$7G$	&$4$		&$29$		&$7M$	&$12$	&$27$		&$7S$	&$0$		&$79$\\
$7B$		&$0$		&$49$		&$7H$	&$6$		&$25$		&$7N$	&$2$		&$31$		&$7T$	&$20$	&$49$\\
$7C$	&$10$	&$29$		&$7I$	&$0$		&$33$		&$7O$	&$8$		&$19$		&$7U$	&$16$	&$33$\\
$7D$	&$10$	&$27$		&$7J$	&$2$		&$29$		&$7P$	&$0$		&$47$		&$7V$	&$32$	&$67$\\
$7E$		&$0$		&$37$		&$7K$	&$6$		&$21$		&$7Q$	&$12$	&$23$		&$7W$	&$20$	&$21$\\
$7F$		&$8$		&$21$		&$7L$	&$2$		&$47$		&$7R$	&$20$	&$39$
\end{tabular}\caption{\label{e7ss7} Semisimple conjugacy classes of elements of order $7$ in $E_7(q)$}
\end{table}%

\subsection{Admissible triples}
We first turn to determining admissible Hurwitz generating triples in characteristic $2$.

\begin{lem}
Let $G \cong E_7(2^n)$ and suppose that $(x,y,z)$ is an admissible Hurwitz generating triple.
Then $(x,y,z)$ is of type $(4A_1,3C,7F/7K/7O)$.
\end{lem}
\begin{proof}
Suppose that $G \cong E_7(2^n)$.
By first considering the adjoint representation we see that $d_y^L \leq 134 - 70 - 19 = 45$ and so $y \in 3C$.
Then, since $d_z^L \geq 19$ we see that $d_x^L \leq 134 - 43 - 19 = 72$ and so $x \in (3A_1)'$ or $4A_1$.
Next we have $d_z^L \leq 134 - 70 - 43 = 21$ and so $z \in 7F$, $7K$, $7O$ or $7W$.
We now consider the minimal representation and find that $d_z^M \leq 56 - 28 - 20 = 8$ and so $z \notin 7W$.
Moreover, $d_z^M \geq 6$ and so $d_x^M \leq 56 - 20 - 6 = 30$ and so $x \notin (3A_1)'$.
Hence $x \in 4A_1$.
\end{proof}

Next we consider characteristic $3$.
\begin{lem}
If $G \cong SE_7(3^n)$ and $(x,y,z)$ is an admissible Hurwitz triple for $G$, then $(x,y,z)$ is of type $(2B,2A_2+A_1,7O)$.
\end{lem}
\begin{proof}
Let $G$ and $(x,y,z)$ be as in the hypothesis.
By considering the adjoint representation we see that $d_y^L \leq 133 - 69 - 19 = 45$ and so $y \in 2A_2+A_1$.
Similarly, $d_z^L \leq 19$ and so $z \in 7O$.
By then considering the minimal representation we see that $d_x^M \leq 56 - 20 - 8 = 28$ and so $x \in 2B$.
\end{proof}

\begin{lem}\label{se73}
If $G \cong E_7(3^n)$ and $(x,y,z)$ is an admissible Hurwitz triple for $G$, then $(x,y,z)$ appears in Table \ref{e7admiss}.
\end{lem}
\begin{proof}
We let $G$ and $(x,y,z)$ be as in the hypothesis and consider the adjoint representation of $G$.
Since $d_x^L \leq 133 - 45 - 19$, $x \in 2A$ or $2C$.
If $x \in 2A$, then $d_y^L=45$ and $d_z^L=19$, hence $y \in 2A_2+A_1$ and $z \in 7O$.
Otherwise, $x \in 2B$ in which case $d_y^L + d_z^L \leq 133 - 63 = 70$.
The pairs $(y,z)$ which satisfy this bound appear in Table \ref{e7admiss}.
\end{proof}

In characteristic $7$ we have the following.

\begin{lem}\label{se77}
Let $G$ be a group and let $(x,y,z)$ be an admissible Hurwitz triple for $G$.
\begin{enumerate}
\item If $G \cong SE_7(7^n)$, then $(x,y,z)$ is of type $(2B,3C,E_7(a_5)/A_6)$.
\item If $G \cong E_7(7^n)$, then $(x,y,z)$ appears in Table \ref{e7admiss}.
\end{enumerate}
\end{lem}
\begin{proof}
Let $G$ be a group and let $(x,y,z)$ be a Hurwitz generating triple for $G$.
We first consider the adjoint representation since it applies to both cases.
If $d_x^L = 69$, then $d_y^L \leq 133 - 69 - 19 = 45$ and so $y \in 3C$.
In addition, $d_z^L \leq 133 - 69 - 43 = 21$ and so $z \in E_7(a_5)$ or $A_6$.
If $G \cong SE_7(7^n)$, then we show $x \notin 2A$ as usual, but cannot improve the restriction on $z$.
This proves (1).

Now suppose that $G \cong E_7(7^n)$.
By the above argument, we see that $(2A,3C,E_7(a_5)/A_6)$ is an admissible triple.
Now suppose that $x \notin 2A$ in which case $x \in 2C$.
Then $d_y^L \leq 133 - 63 - 19 = 51$ and so $y \in 3A$, $3C$ or $3D$.
If $y \in 3A$ or $3D$, then $d_y^L = 49$ and $d_z^L \leq 133 - 63 - 49 = 21$ and again $z \in E_7(a_5)/A_6$
Otherwise $y \in 3C$ and so $d_z^L \leq 133 - 63 - 43 = 27$ and $z$ belongs to one of: $D_5(a_1)$, $A_4+A_2$, $(A_5)'$, $A_5+A_1$, $D_5(a_1)+A_1$, $D_6(a_2)$, $E_6(a_3)$, $E_7(a_5)$ or $A_6$.
This completes the proof.
\end{proof}

Finally we turn to the general case.

\begin{lem}
Let $G$ be a group and let $(x,y,z)$ be an admissible Hurwitz triple for $G$.
\begin{enumerate}
\item If $G \cong SE_7(p^n)$, where $p \neq 2,3,7$, then $(x,y,z)$ is of type $(2B,3C,7F/7K/7O)$.
\item If $G \cong E_7(p^n)$, where $p \neq 2,3,7$, then $(x,y,z)$ appears in Table \ref{e7admiss}.
\end{enumerate}
\end{lem}
\begin{proof}
The proof follows the same procedure as that of Lemma \ref{se77}.
In the case $d_x^L = 69$, it follows that $y \in 3C$, $d_z^L \leq 21$ and so $z \in 7F$, $7K$, $7O$ or $7W$.
If $G \cong SE_7(p^n)$ the minimal representation can again be used to show $x \notin 2A$ and $z \notin 7W$.
If $G \cong E_7(p^n)$ we again have $x \in 2A$ and $z \in 7F$, $7K$, $7O$ or $7W$.

Otherwise, $G \cong E_7(p^n)$, $x \in 2C$, $y \in 3A$, $3C$ or $3D$.
If $y \in 3A$ or $3D$, then $d_y^L = 49$ and so $d_z^L \leq 21$ and again $z \in 7F$, $7K$, $7O$ or $7W$.
If $y \in 3C$, then $d_y^L = 43$, $d_z^L \leq 27$ and so $z \in 7D$, $7F$, $7H$, $7K$, $7M$, $7O$, $7Q$ or $7W$.
This completes the proof.
\end{proof}

\begin{table}[h!]\centering
\begin{tabular}{c | c c c | c c c c}
$G$				&$x$			&$y$								&$z$						&Conditions\\\hline
$E_7(2^n)$		&$4A_1$		&$3C$							&$7O$\\
				&"			&"								&$7F, \; 7K$				&$2^n \equiv \pm1 \; (7)$\\
\hline
$E_7(3^n)$		&$2A$		&$2A_2+A_1$						&$7O$\\
				&$2C$		&$A_2+2A_1$						&"\\
				&''			&$2A_2, \; A_2+3A_1, \; 2A_2+A_1$		&$7O, \; 7W$\\
				&''			&"								&$7F, \; 7K$				&$3^n \equiv \pm1 \; (7)$\\
				&''			&$2A_2+A_1$						&$7F, \; 7H, \; 7K$			&$3^n \equiv \pm1 \; (7)$\\
\hline
$E_7(7^n)$		&$2A, \; 2C$	&$3C$							&$E_7(a_5), \; A_6$\\
				&$2C$		&$3A, \; 3D$						&"\\
				&"			&$3C$							&$D_5(a_1), \; A_4+A_2, \; A_5+A_1$\\
				&"			&"								&$D_5(a_1)+A_1, \; D_6(a_2), \; E_6(a_3)$\\
\hline
$E_7(p^n)$		&$2A, \; 2C$	&$3C$							&$7O, \; 7W$\\
$p \neq 2,3,7$		&"			&"								&$7F, \; 7K$				&$p^n \equiv \pm1 \; (7)$\\
				&$2C$		&"								&$7D, \; 7H, \; 7M, \; 7Q$		&$p^n \equiv \pm1 \; (7)$\\
				&"			&$3A, \; 3D$						&$7O, \; 7W$\\
				&"			&"								&$7F, \; 7K$				&$p^n \equiv \pm1 \; (7)$\\
\hline
\hline
$SE_7(3^n)$		&$2B$		&$2A_2+A_1$						&$7O$\\
\hline
$SE_7(7^n)$		&$2B$		&$3C$							&$E_7(a_5), \; A_6$\\
\hline
$SE_7(p^n)$		&$2B$		&$3C$							&$7O$\\
$p \neq 2,3,7$		&"			&"								&$7F, \; 7K$				&$p^n \equiv \pm1 \; (7)$
\end{tabular}\caption{\label{e7admiss}Admissible Hurwitz triples for $E_7(q)$}
\end{table}

\section{Admissible Hurwitz triples in $E_8(q)$}
We at last turn to the groups $E_8(q)$.
The Schur multiplier of $E_8(q)$ is trivial for all $q$, and we only consider the smallest representation, which is the adjoint representation, $L$ of dimension 248.
This representation is irreducible in all characteristics.

\smallskip

For the conjugacy classes of unipotent elements and the dimensions of their fixed point spaces we refer to \cite[Table 9]{lawjor} and follow the notation there; for the semisimple elements we refer to \cite[Table 4]{cohgri} and follow the notation there.
We reproduce this data in Tables \ref{e823} and \ref{e87}.

\begin{table}[h!]\centering
\begin{tabular}{c | c | c c}
$x$			&$d_x^L$		&GLS\\
\hline
$A_1$		&190\\
$2A_1$		&156\\
$3A_1$		&138\\
$4A_1$		&128\\
\hline
$2A$			&136			&$t_8$\\
$2B$			&120			&$t_1$
\end{tabular}
\quad\quad\quad
\begin{tabular}{c|c||c|c}
$y$			&$d_y^L$		&$y$				&$d_y^L$\\ \hline
$A_1$		&190			&$A_2+2A_1$		&102\\
$2A_1$		&156			&$A_2+3A_1$		&94\\
$3A_1$		&136			&$2A_2$			&92\\
$A_2$		&134			&$2A_2+A_1$		&88\\
$4A_1$		&120			&$2A_2+2A_1$	&84\\
$A_2+A_1$	&112	
\end{tabular}
\quad\quad\quad
\begin{tabular}{c|c|c}
$y$		&$d_y^L$		&GLS\\
\hline
$3A$		&80			&$t_4$\\
$3B$		&86			&$t_7$\\
$3C$	&92			&$t_1$\\
$3D$	&134			&$t_8$\\
\end{tabular}
\caption{\label{e823} Conjugacy classes of elements of orders 2 and 3 in $E_8(q)$}
\end{table}

\begin{table}[h!]\centering
\begin{tabular}{c|c||c|c||c|cccccc}
$z$			&$d_z^L$	&$z$				&$d_z^L$	&$z$				&$d_z^L$\\ \hline
$A_1$		&190		&$2A_2+2A_1$	&80		&$D_5(a_1)+A_1$	&52\\
$2A_1$		&156		&$A_3+2A_1$		&76		&$A_4+A_2+A_1$	&52\\
$3A_1$		&136		&$D_4(a_1)+A_1$	&72		&$D_4+A_2$		&50\\
$A_2$		&84		&$A_3+A_2$		&70		&$E_6(a_3)$		&50\\
$4A_1$		&120		&$A_4$			&68		&$A_4+A_3$		&48\\
$A_2+A_1$	&112		&$A_3+A_2+A_1$	&66		&$A_5+A_1$		&46\\
$A_2+2A_1$	&102		&$D_4+A_1$		&64		&$D_5(a_1)+A_2$	&46\\
$A_3$		&100		&$D_4(a_1)+A_12$	&64		&$D_6(a_2)$		&44\\
$A_2+3A_1$	&94		&$A_4+A_1$		&60		&$E_6(a_3)+A_1$	&44\\
$2A_2$		&92		&$2A_3$			&60		&$E_7(a_5)$		&42\\
$2A_2+A_1$	&86		&$D_5(a_1)$		&58		&$E_8(a_7)$		&40\\
$A_3+A_1$	&84		&$A_4+2A_1$		&56		&$A_6$			&38\\
$D_4(a_1)$	&82		&$A_4+A_2$		&54		&$A_6+A_1$		&36\\
$D_4$		&80		&$A_5$			&52
\end{tabular}
\quad\quad\quad\quad\quad
\begin{tabular}{c|ccc||c|ccccccc}
$z$		&$d_z^L$\\
\hline
$7A$		&$64$\\
$7B$		&$52$\\
$7C$	&$50$\\
$7D$	&$40$\\
$7E$		&$40$\\
$7F$		&$36$\\
$7G$	&$54$\\
$7H$	&$38$\\
$7I$		&$50$\\
$7J$		&$82$\\
$7K$		&$92$\\
$7L$		&$68$\\
$7M$	&$134$\\
$7N$	&$80$\\
\end{tabular}\caption{\label{e87} Conjugacy classes of elements of orders $7$ in $E_8(q)$}
\end{table}%

We begin with the following.

\begin{lem}\label{e82}
Let $G \cong E_8(2^n)$ and suppose $(x,y,z)$ is an admissible Hurwitz triple for $G$.
Then $(x,y,z)$ is of type $(4A_1,3A,7D/7E/7F/7H)$.
\end{lem}
\begin{proof}
Let $G$ and $(x,y,z)$ be as in the hypothesis.
Since $d_y^L + d_z^L \geq 80 + 36$ in characteristic 2, it follows that $d_x^L \leq 248 - 80 - 36 = 132$ and so $x \in 4A_1$.
Then, since $d_y^L \leq 248 - 128 - 36 = 84$, $y \in 3A$.
Finally, those classes for which $d_z^L \leq 248 - 128 - 80 = 40$ are: $7D$, $7E$, $7F$ and $7H$.
This completes the proof.
\end{proof}

Next we prove the following:

\begin{lem}\label{e8odd}
Let $G \cong E_8(q)$, where $q$ is odd, and suppose $(x,y,z)$ is an admissible Hurwitz triple for $G$.
Then $x \in 2B$, $y \notin 3D$ and $d_y^L + d_z^L \leq 128$.
\end{lem}
\begin{proof}
Let $G$ and $(x,y,z)$ be as in the hypothesis.
Across all characteristics $p \neq 2$ we see that $d_y^L \geq 80$ and $d_z^L \geq 36$ so $d_x^L \leq 248 - 80 - 36 = 132$ and so $x \in 2B$.
Since $d_x^L = 120$, it follows that for an admissible Hurwitz triple, $d_y^L + d_z^L \leq 248 - 120 = 128$.
Since $d_z^L \geq 36$ we see that $y \notin 3D$.
\end{proof}

The precise combinations of conjugacy classes which yield admissible Hurwitz triples for $E_8(q)$ when $q$ is odd are then immediate.
we summarise them in the following corollary.

\begin{cor}
Let $G \cong E_8(p^n)$ where $p \neq 2$ and suppose $(2B,y,z)$ is an admissible Hurwitz triple for $G$.
One of the following holds:
\begin{enumerate}
\item $p=3$, $y \in 2A_2+2A_1$ or $2A_2+A_1$ and $z \in 7D$, $7E$, $7F$ or $7H$;
\item $p=3$, $y \in 2A_2$ and $z \in 7F$;
\item $p=7$, $y \in 3A$ and $z \in A_4+A_3$, $A_5+A_1$, $D_5(a_1)+A_2$, $D_6(a_2)$, $E_6(a_3)+A_1$, $E_7(a_5)$, $E_8(a_7)$, $A_6$ or $A_6+A_1$;
\item $p=7$, $y \in 3B$ and $z \in E_7(a_5)$, $E_8(a_7)$, $A_6$ or $A_6+A_1$;
\item $p=7$, $y \in 3C$ and $z \in A_6+A_1$;
\item $p \neq 3,7$, $y \in 3A$ or $3B$ and $z \in 7D$, $7E$, $7F$ or $7H$;
\item $p \neq 3,7$, $y \in 3C$ and $z \in 7F$.
\end{enumerate}
\end{cor}
\begin{proof}
Let $G$, $y$ and $z$ be as in the hypothesis.
By Lemma \ref{e8odd} we know that $d_y^L + d_z^L \leq 128$.
Those pairs which satisfy this inequality can easily be determined as follows.
First suppose $p=3$.
If $y \in 2A_2+2A_1$ then $d_z^L \leq 44$; if $y \in 2A_2+A_1$ then $d_z^L \leq 40$; and if $y \in 2A_2$ then $d_z^L \leq 36$.
Now suppose $p \neq 3$.
If $y \in 3A$, then $d_z^L \leq 48$; if $y \in 3B$, then $d_z^L \leq 42$; and if $y \in 3C$, then $d_z^L \leq 36$.
Those classes $z$ which satisfy the above bounds appear as stated.
\end{proof}

\begin{table}[h!]\centering
\begin{tabular}{c | c c c | c c c c}
$G$				&$x$			&$y$							&$z$							&Conditions\\
\hline
$E_8(2^n)$		&$4A_1$		&$3A$						&$7H$\\
				&''			&''							&$7D, \; 7E, \; 7F$				&$2^n \equiv 1 \; (7)$\\
\hline
$E_8(3^n)$		&$2B$		&$2A_2+A_1, \; 2A_2+2A_1$		&$7H$\\
				&''			&''							&$7D, \; 7E, \; 7F$				&$3^n \equiv 1 \; (7)$\\
				&''			&$2A_2$						&$7F$						&$3^n \equiv 1 \; (7)$\\
\hline
$E_8(7^n)$		&$2B$		&$3A$						&$A_4+A_3, \; A_5+A_1$\\
				&''			&''							&$D_5(a_1)+A_2, \; D_6(a_2)$\\
				&''			&''							&$E_6(a_3)+A_1$\\
				&''			&$3A, \; 3B$					&$E_7(a_5), \; E_8(a_7), \; A_6$\\
				&''			&$3A, \; 3B, \; 3C$				&$A_6+A_1$\\
\hline
$E_8(p^n)$		&$2B$		&$3A, \; 3B$					&$7H$\\
$p\neq2,3,7$		&"			&"							&$7D, \; 7E, \; 7F$				&$p^n \equiv \pm 1 \; (7)$\\
				&''			&$3C$						&$7F$						&$p^n \equiv \pm 1 \; (7)$
\end{tabular}
\caption{\label{e8admiss}Admissible Hurwitz triples for $E_8(q)$}
\end{table}

\section{New examples of Hurwitz groups}
Using the results of the previous sections, we turn to Magma \cite{magma} to search for explicit Hurwitz generating triples for groups of a tractable size; unfortunately $E_8(2)$ is out of reach for us, its order is approximately $2^{248}$, whereas the largest group we consider, $F_4(8)$, has order approximately $2^{156}$.

\smallskip

In general, we do not wish to search through the entire group and so our strategy exploits the theory of $(B,N)$-pairs.
Roughly speaking, we construct a suitable subgroup of $B$, as a group of upper triangular matrices of $G$, and then choose an element from $N$ which does not belong to a maximal parabolic subgroup of $G$ containing $B$.

\smallskip

Concretely, we employ the explicit generators given in \cite{howlett} for the minimal representations of the above groups.
We begin with a Sylow $p$-subgroup $S$ consisting of upper triangular matrices, where $p$ is the defining characteristic.
If $p$ is even, we let $S_0=S$; if $p$ is odd, we let $S_0$ be the subgroup generated by $S$ along with an admissible involution from $N_G(S)$.

\smallskip

Next, as it turns out, a suitable power of the explicit Coxeter elements $n$ given in \cite{howlett} belongs to an admissible class of elements of order 3, which we denote as usual by $y$.
We then search randomly through admissible involutions in $S_0$ until we find an $x$ such that $o(xy)=7$ and $(x,y,z)$ is an admissible triple.

\smallskip

To prove generation in a na\"ive way, i.e. ask Magma if our elements generate the whole group, is an incredibly expensive operation and in general does not produce an answer (in any reasonable amount of time).
On the contrary, to show that our subgroup $H = \langle x,y\rangle$ acts irreducibly is very cheap.
We thus construct irreducible subgroups of the appropriate groups, and then use results from the literature to prove that $H$ must in fact be $G$.
The complete classification of maximal subgroups of the groups treated in this paper has recently been completed by David Craven.
However, we shall use the results of Liebeck and Seitz \cite{lieseitzirr} since this produces a much shorter list of groups to consider.
Our goal is then to prove Theorem \ref{new}.

\subsection{Proof of Theorem \ref{new}}
We begin by determining for the groups in question the maximal subgroups which act irreducibly on the minimal representation.
In the following lemmas, for a group $M$, we denote by $F^*(M)$ the generalized Fitting subgroup of $M$.
Recall that $F^*(G) = F(G)E(G)$ where $F(G)$ is the Fitting subgroup (the product of all nilpotent normal subgroups) and $E(G)$ the subgroup generated by the components of $G$, where a component is a quasisimple subnormal subgroup of $G$.
The salient point is that $M/F^*(M)$ is soluble and so if a perfect group, such as a Hurwitz group, is contained in $M$, then it is contained in $F^*(M)$.

\smallskip

We begin with a few preliminary lemmas, first in the case where $G$ is of type $F_4$.

\begin{lem} \label{magaard}
Let $G \cong F_4(p)$, where $p$ is odd and suppose that $M$ is a maximal subgroup of $G$ that acts irreducibly on the minimal representation of $G$.
Then one of the following holds:
\begin{enumerate}
\item $p=7$ and $M \cong G_2(7)$;
\item $F^*(M)$ is isomorphic to one of the following:
\[3^3.L_3(3), \quad L_2(25), \quad L_2(27), \quad L_3(3), \quad {}^3D_4(2), \quad U_3(3).\]
\end{enumerate}
\end{lem}
\begin{proof}
See \cite[Corollary 2]{lieseitzirr}.
\end{proof}

\begin{cor}\label{f4odd5}
Let $G \cong F_4(p)$, where $p$ is odd, and let $H$ be a Hurwitz subgroup of $G$.
If $H$ acts irreducibly on the minimal representation and has order divisible by $5$, then $H \cong G$.
\end{cor}
\begin{proof}
Let $G$ and $H$ be as in the hypothesis.
Since $H$ acts irreducibly on the minimal representation, it is contained in a subgroup $M$ isomorphic to one of those listed in the previous Lemma.
Since $5$ divides $|M|$, $M$ must be $L_2(25)$, which is not Hurwitz since $7$ does not divide $|M|$.
Hence $H=G$, completing the proof.
\end{proof}

\begin{lem}\label{f4873109}
Let $G \cong F_4(8)$ and let $H$ be a Hurwitz subgroup of $G$.
If $H$ acts irreducibly on the minimal representation of $G$ and both $73$ and $109$ divide $|H|$, then $H=G$.
\end{lem}
\begin{proof}
Let $G \cong F_4(8)$, let $H$ be any perfect subgroup of $G$ and suppose that $H$ acts irreducibly on the minimal representation of $G$.
By \cite[Corollary 2]{lieseitzirr}, $H$ is contained in one of the following: $F_4(2)$, $^2F_4(8)$, $^2F_4(2)'$, $P\Omega_8^+(8)$, $\Omega_9(8)$, $3^3.L_3(3)$, $L_2(25)$, $L_2(27)$, $L_3(3)$, $A_9$, $A_{10}$ or $L_4(3)$.
Since none of these groups have order divisible by both 73 and 109, it follows that $H = G$.
\end{proof}

Eventually we shall use an explicit construction of $F_4(q)$ and across all $q$ that we consider, we are able to use a fixed representative for $y$.
In the following lemma we demonstrate that for all $q$, $y$ belongs to an admissible class of elements of order $3$.
We let $E_{i,j}$ denote the matrix whose entries are 0 everywhere except for a 1 in the $(i,j)$-th position.

\begin{lem}\label{f4y}
Let $G \cong F_4(p^f)$ with generators as given in \cite[Section 3.5]{howlett} so that $G \sg GL_{26}(p^f)$.
The element $y := n^4$, where
\[n:=E_{1,5}+E_{4,7}+E_{5,9}+E_{6,10}+E_{7,12}+E_{9,20}+E_{11,21}+E_{13,14}+E_{18,23}+E_{19,16}\]
\[+E_{20,24}+E_{21,18}+E_{22,17}+E_{23,19}+E_{24,25}+E_{25,26}+E_{26,22}\]
\[-E_{2,1}-E_{3,2}-E_{8,3}-E_{10,4}-E_{12,11}-E_{14,13}\]
\[-E_{14,14}-E_{15,6}-E_{16,15}-E_{17,8},\]
has order $3$.
If $p=3$, then $y \in \tilde{A_2}+A_1$, otherwise $y \in 3C$.
\end{lem}
\begin{proof}
Following the construction of $G$ in \cite[Section 3.5]{howlett} the element $n$ is shown to be a Coxeter element of $G$, hence $o(n)=12$.
This is a 26-dimensional minimal representation for $G$ (except when $p=3$) and we can then easily compute $d_M^y=9$.
Since $n$ is a Coxeter element of $G$, and all such elements are conjugate in $G$, it is sufficient to find a Coxeter element in an adjoint representation of $G$ and determine the corresponding $d_L^y$.
We can construct such a Coxeter element $n_L$ in the adjoint representation of $G$ in Magma, and then compute $d_y^L$.
If $q=3$, then $d_y^L=18$, otherwise $d_y^L = 16$.
This determines the conjugacy of $y$, as claimed.
\end{proof}

The corresponding lemmas can be proved similarly by direct computation for $E_6(3)$ and $E_7(2)$.

\begin{lem}\label{e6y}
Let $G \cong E_6(3)$ with generators as given in \cite[Section 3.4]{howlett} so that $G \sg GL_{27}(3)$.
The element $y := n^4$, where $n$ is a Coxeter element of $G$, belongs to $2A_2 + A_1$.
\end{lem}

\begin{proof}
By checking both the minimal and adjoint representations, this can easily be verified.
\end{proof}

\begin{lem}\label{e7y}
Let $G \cong E_7(2)$ with generators as given in \cite[Section 3.3]{howlett} so that $G \sg GL_{56}(2)$.
The element $y := n^6$, where $n$ is a Coxeter element of $G$, belongs to $3C$.
\end{lem}

\begin{proof}
The proof is as in the previous lemmas.
\end{proof}

Next we prove the remaining analogous results for $E_6(3)$ and $E_7(2)$.

\begin{lem}\label{e63H}
Let $G \cong E_6(3)$ and suppose that $H$ is a Hurwitz subgroup of $G$.
If $H$ acts irreducibly on the minimal representation and $757$ divides $|H|$, then $H = G$.
\end{lem}
\begin{proof}
Let $G$ and $H$ be as in hypothesis.
Since $H$ contains an element of order $757 = q^6+q^3+1$, $H$ contains a maximal torus of this order.
Since, in addition, $H$ acts irreducibly, we see from \cite[Corollary 2]{lieseitzirr} and \cite{92lsssubs} that $H$ must be contained in a subgroup isomorphic to $L_3(27)$.
Since this is not a Hurwitz group \cite{cohhur}, it follows that $H = G$.
\end{proof}

In the case of $E_7(2)$, the classification of its maximal subgroups was determined in \cite{maxe72}, which will aid the proof of the following lemma.

\begin{lem}\label{e72H}
Let $G \cong E_7(2)$ and $H$ be a Hurwitz subgroup of $H$.
If $H$ acts irreducibly on the minimal representation $V$ of $G$ and if $13$ divides $|H|$, then $H = G$.
\end{lem}
\begin{proof}
Let $G$ and $H$ be as in hypothesis.
If $H$ acts irreducibly on $V$, then, by \cite[Corollary 2]{lieseitzirr} and \cite{maxe72} $H$ is contained in a subgroup isomorphic to one of the following:
\[L_8(2).2, \quad U_8(2).2, \quad (L_2(2)^7).L_3(2), \quad 3^7.(2 \times Sp_6(2)).\]
In fact, when $q = 2$, $(L_2(2)^7).L_3(2) \sg 3^7.(2 \times Sp_6(2))$   \cite[Proof of Lemma 2.5]{92lsssubs}.
Since $H$ is perfect, we can assume that $H$ is contained in $L_8(2)$, $U_8(2)$ or $3^7.Sp_6(2)$.
Since 13 divides $|G|$, but does not divide the orders of $L_8(2)$, $U_8(2)$ or $3^7.Sp_6(2)$, the conclusion holds.
\end{proof}

\smallskip

Our strategy for the remainder of the proof is then the following.
Since the groups themselves are quite large, we use the generators given in \cite{howlett} to construct upper triangular Sylow $p$-subgroups $S$ of $G$, where $p$ is the defining characteristic, in Magma.
If $p$ is even we let $S_0 = S$, otherwise we construct $S_0 := S \rtimes \langle h \rangle$ where $h$ is an admissible involution belonging to a maximal torus normalising $S$.
We let $y$ be as given in Lemma \ref{f4y}, \ref{e6y} or \ref{e7y} as appropriate.
We then perform a random search of admissible involutions $x$ in $S_0$ until we find an $x$ such that $o(xy)=7$.
If $p \neq 2$, then, by construction, since $4$ does not divide the order of $S_0$, any such involution will be conjugate to $h$, which makes our search easier.
Finally we construct the subgroup $H := \langle x,y \rangle$ and use the preceding lemmas to prove that $H=G$ by checking that $H$ acts irreducibly and contains elements of the necessary orders, which are both very cheap operations.

\smallskip

The code for this paper, including the generating matrices in Magma format, can be found online at \arxiv{2003.12595}.
This file also includes the generic code (except for $F_4$ in even characteristic, which can easily be modified from the odd characteristic case).

\subsubsection{$G \cong F_4(3)$}
\begin{lem}
There exists a Hurwitz generating triple for $F_4(3)$ of type $(2A,\tilde{A_2} + A_1,7N)$.
\end{lem}
\begin{proof}
Using the generators for $G \cong F_4(3) \sg GL_{26}(3)$ given in \cite[Section 3.5]{howlett} and implemented in Magma we proceed as follows.
We let $y$ be as in Lemma \ref{f4y} so that $y \in \tilde{A_2} + A_1$.
We then generate the subgroup $S \sg G$ as follows:
\[S := \langle x_A(1), \; x_B(1), \; x_C(1), \; x_D(1), \; h_{2B+C,\xi}\rangle\]
where $\xi$ generates $\mathbb{F}_3^\times$.
We can compute $d_G^{h_{2B+C,\xi}} = 14$ and, since the Sylow $2$-subgroup order of $S$ has order 2, all involutions in $S$ are conjugate and belong to $2A$.
Following a random search through $S$-conjugates of $h_{2B+C,\xi}$ we find such an $x$ (provided in the accompanying files) where $o(xy)=7$ and $d_G^{xy} = 2$, so $xy \in 7N$.
Next, we construct $H := \langle x,y\rangle$ and find that it contains elements of order $5$.
Using Parker's Meataxe in Magma, we find $\mathbb{F}_3^{26}H$ has a 25-dimensional irreducible quotient, $H'$ and so, by Corollary \ref{f4odd5}, we conclude that $H = G$.
\end{proof}

\subsubsection{$G \cong F_4(5)$}
\begin{lem}
There exists a Hurwitz generating triple for $F_4(5)$ of type $(2A,3C,7N)$.
\end{lem}
\begin{proof}
Our method is identical to the preceding lemma.
We use the generators for $G \cong F_4(5) \sg GL_{26}(5)$ as in \cite[Section 3.5]{howlett} and let $y$ be as in Lemma \ref{f4y}.
We then construct the following subgroup:
\[S := \langle x_A(1), \; x_B(1), \; x_C(1), \; x_D(1), \; h_{2B+C,\xi}^2\rangle\]
where $\xi$ generates $\mathbb{F}_5^\times$, so that $h_{2B+C,\xi}^2$ belongs to $2A$ and again a Sylow $2$-subgroup of $S$ has order 2.
We then search through random $S$-conjugates $x$ of $h_{2B+C,\xi}^2 $ until we find an $x$ (provided in the accompanying files) such that $o(xy)=7$ and $xy \in 7N$.
Letting $H:=\langle x,y\rangle$ we find that $H$ contains elements of order $5$ and use Parker's Meataxe to determine that $H$ is irreducible.
Then, by Lemma \ref{f4odd5}, $H = G$.
\end{proof}

\subsubsection{$G \cong F_4(7)$}
\begin{lem}
There exists a Hurwitz generating triple for $F_4(7)$ of type $(2A,3C,F_4(a_2))$.
\end{lem}
\begin{proof}
Our proof is similar to those of the preceding lemmas.
Using the construction of $F_4(7) \sg GL_{26}(7)$ given in \cite[Section 3.5]{howlett}, we let $S$ denote the upper triangular Sylow $7$-subgroup of $G$.
The element $x_0 := h_{2B+C,\xi}^3$ belongs to $2A$ and so we let $S_0 := \langle S,x_0$ so that a Sylow $2$-subgroup of $S_0$ has order 2.
We let $y:=n^4$, where $n$ is the Coxeter element of $G$ given in their construction, so that by Lemma \ref{f4y}, $y \in 3C$.
We then search randomly through $S_0$ conjugates of $x_0$ until we find an $x$ (provided in the accompanying files) such that $z:=xy$ has order $7$ and $d_M^z = 4$.
The invariant factors of $z$ can be determined in Magma and we then see that $z \in F_4(a_2)$.
Letting $H:= \langle x,y \rangle$ we are able to find random elements of order $5$ and using Parker's Meataxe we see that $H$ is an irreducible subgroup.
Then, by Corollary \ref{f4odd5}, it follows that $H=G$.
\end{proof}

\subsubsection{$G \cong F_4(8)$}
\begin{lem}
There exists a Hurwitz generating triple for $F_4(8)$ of type $(A_1 + \tilde{A_1},3C,7O)$.
\end{lem}
\begin{proof}
Using the generators in \cite[Section 3.5]{howlett} we construct in Magma the full upper triangular Borel subgroup of $G \cong F_4(8) \sg GL_{26}(8)$:
\[B:= \langle x_A(1), \; x_B(1), \; x_{2B+C}(1), \; x_C(1), \; x_D(1), \; h_{A,\xi}, \; h_{B,\xi}, \; h_{C,\xi}, \; h_{D,\xi} \rangle\]
where $\xi$ is a primitive element of $\mathbb{F}_8^\times$.
Letting $y$ be as in Lemma \ref{f4y} we search through random elements $x$ of order $2$ in $B$ which belong to $A_1 + \tilde{A}_1$, such that $o(xy)=7$ and $d_M^{xy}=4$.
We eventually find such an $x$ (provided in the accompanying files).
We check that $H:=\langle x,y \rangle$ acts irreducibly, which it does, and that $H$ contains elements of order $73$ and $109$, which it does.
Then, by Lemma \ref{f4873109}, $(x,y,xy)$ is a Hurwitz generating triple for $G$ of type $(A_1+\tilde{A}_1,3C,7O)$.
\end{proof}

\begin{remark}
Of the groups appearing in Theorem \ref{new}, $F_4(8)$ is the largest in cardinality having order approximately $2^{156}$.
The cardinality of $E_8(2)$ is approximately $2^{248}$ and so we did not attempt to search for a Hurwitz generating triple in $E_8(2)$.
\end{remark}

\subsubsection{$G \cong E_6(3)$}
\begin{lem}
There exists a Hurwitz generating triple for $E_6(3)$ of type $(2A,2A_2 + A_1,7N)$.
\end{lem}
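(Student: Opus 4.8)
The plan follows the template of Lemma \ref{f43578}: first locate the triple by a random search in Magma, then prove that it generates $G := E_6(3)$. Since $\gcd(3,q-1)=1$ when $q=3$, the group $E_6(3)$ is simple and coincides with its universal version, so both its $27$-dimensional module $M$ and its $77$-dimensional irreducible adjoint module $L$ (see Table \ref{dims}) are faithful, and one may work inside $GL_{27}(3)$. Searching over random pairs $(x,y)$ with $x$ an involution and $y$ of order $3$, and setting $z := (xy)^{-1}$, I keep those with $|z|=7$. The class of $z$ is then automatic: every element of order $7$ in $E_6(3)$ is semisimple, and since $3 \not\equiv \pm 1 \pmod{7}$ the unique semisimple class of order $7$ is $7N$ (compare Table \ref{sevens}). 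An involution in characteristic $3$ is semisimple and lies in $2A$ precisely when $d_x^M=15$ (the only other option, $2B$, has $d_x^M=11$); an element of order $3$ lies in $2A_2+A_1$ precisely when $d_y^M=9$ together with the correct value of $d_y^L$, which separates it from $2A_2$ (the relevant dimensions being read off from Tables \ref{twos} and \ref{threes}, after adjusting the adjoint dimension as in Table \ref{dims}). A triple with $x\in 2A$ and $y\in 2A_2+A_1$ is found without difficulty, and is then of the stated type $(2A,2A_2+A_1,7N)$, in agreement with Theorem \ref{main}.

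It remains to show $H := \langle x,y\rangle = G$. The plan is to locate, again by random search inside $H$, an element of order $757=\Phi_9(3)$ and an element of order $41$ (a primitive prime divisor of $\Phi_8(3)=82$), and then to argue, in the spirit of Lemma \ref{magaard}, that no proper subgroup of $G$ has order divisible by both. One appeals to the reduction theorem for maximal subgroups of exceptional groups \cite{90lsfinalg}, the classification of subgroups of maximal rank \cite{92lsssubs}, and the lists of almost simple maximal subgroups \cite{magaard,98lssubstrut,99lsonfin}. Every maximal torus of $E_6(3)$ has order $\prod_i\Phi_{d_i}(3)$ with $\sum_i\varphi(d_i)=6$, so it, and likewise its normaliser, cannot involve both $\Phi_8$ and $\Phi_9$ since $\varphi(8)+\varphi(9)=10>6$; a parabolic subgroup or a maximal-rank reductive subgroup has all simple factors of type $A$ or $D$, whose defining degrees lie in $\{2,3,4,5,6,8\}$ (none a multiple of $9$), and so contains no element of order $757$; the field-extension subgroup of type $\mathrm{PGL}_3(3^3)$, as well as $F_4(3)$ and $C_4(3)$, each have order coprime to $757$; and each of the handful of almost simple maximal subgroups of $E_6(3)$ whose order is divisible by $757$ has order coprime to $41$ (should a further prime be needed, the primitive prime divisor $11$ of $\Phi_5(3)$ serves the same purpose). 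Hence $H$ is contained in no maximal subgroup and $H=G$.

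The main obstacle is precisely this generation step: unlike the $F_4$ cases there is no ready-made analogue of Lemma \ref{magaard} for $E_6(3)$, so one must either assemble the relevant maximal-subgroup data from the sources above and carry out the divisibility bookkeeping just sketched, or instead certify $\langle x,y\rangle=E_6(3)$ directly in Magma by matrix-group recognition in the $27$-dimensional representation over $\mathbb{F}_3$ (for instance via a composition-tree computation), checking the composition factors and the order. Either route is routine once the correct large-order elements, respectively a workable base, have been pinned down; the one mildly delicate point is confirming that the few large almost simple maximal subgroups of $E_6(3)$ surviving a crude order estimate are genuinely excluded by the chosen primitive-prime-divisor elements.
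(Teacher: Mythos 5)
Your overall route is the same as the paper's: find the triple by random search in Magma and then prove generation by assembling, from Liebeck--Seitz and the related sources, the possible maximal subgroups of $E_6(3)$ and ruling each out by element orders and Lagrange. The only real difference is the choice of witness orders: the paper exhibits elements of orders $3^5-1$, $3^6-1$ and $3^6+3^3+1$ in $H=\langle x,y\rangle$ and checks that their lcm divides the order of no candidate maximal subgroup, whereas you propose $757=\Phi_9(3)$ together with $41\mid\Phi_8(3)$. Your pair does suffice, but two of your supporting claims are wrong as stated and need repair.

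First, the assertion that the field-extension subgroup of type $\mathrm{PGL}_3(3^3)$ has order coprime to $757$ is false, and in fact backwards: $757=27^2+27+1$ divides $27^3-1$, so $L_3(27).3$ is precisely the subgroup in which $757$-elements live most naturally (a Singer-type torus). That subgroup must instead be excluded by your element of order $41$ (its order is $27^3(27^2-1)(27^3-1)$ up to small factors, which is coprime to $41$), so the conclusion survives, but the bookkeeping as written is incorrect, and it is exactly the field-twisted maximal-rank subgroups ($L_3(9)$, $L_3(27)$, ${}^3D_4(3)$) for which your ``degrees in $\{2,\dots,8\}$'' argument does not apply. Second, the class of $z$ is not ``automatic'': $E_6(3)$ has two classes of elements of order $7$ (the classes $7J$ and $7N$ in the paper's notation, both existing for all $q$), so you must verify $d_z^M=3$ directly; you cannot invoke Scott's inequality for this before generation has been established, since it presupposes that $\langle x,y\rangle$ has no nonzero fixed vectors. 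With these two points corrected, your argument is sound and essentially reproduces the paper's proof.
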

\begin{proof}
As in the case of the groups of type $F_4$, we take generators for an upper triangular Sylow $3$-subgroup $S$ of $G \sg GL_{27}(3)$ from \cite[Section 3.3]{howlett}.
The involution $x_0:=h_{a,\mu}$, where $\mu$ generates $\mathbb{F}_3^\times$, has eigenvalue 1 with multiplicity 15, and so $x_0 \in 2A$.
We then let $S_0 := \langle S,x_0\rangle$ so that the Sylow $2$-subgroup of $S_0$ has order 2 and $y$ be as in Lemma \ref{e6y} so that $y \in 2A_2+A_1$.
Finally, we search through random $S_0$-conjugates of $x_0$ until we find an $x \sim x_0$ such that $o(xy)=7$ and $d_M^{xy}=3$ (provided in the accompanying files).
We check that $H:=\langle x,y\rangle$ is an irreducible subgroup of $G$ using Parker's Meataxe, and that $H$ contains an element of order $757$, which both hold for this choice of $x$.
Hence $(x,y,xy)$ is a Hurwitz generating triple for $G$, by Lemma \ref{e63H}, and thus $xy \in 7N$.
\end{proof}

\subsubsection{$G \cong E_7(2)$}
\begin{lem}
There exists a Hurwitz generating triple for $E_7(2)$ of type $(4A_1,3C,7O)$.
\end{lem}
\begin{proof}
As before, we construct $S \sg G \sg GL_{56}(2)$, the Sylow $2$-subgroup consisting of upper triangular matrices using the generators in \cite{howlett}.
We set $x:=x_b(1)x_c(1)x_e(1)x_g(1)$ and $y := n^6 \in 3C$.
It can be determined that $x \in 4A_1$ by computing $d_M^x$ and $d_L^x$ or (more satisfyingly) by inspection of the unipotent conjugacy classes of $G$ \cite{mizuno}.
By Lemma \ref{e72}, $y \in 3C$.
Next, we search through $S$-conjugates of $x$ in order to find an $xy$ of order 7 (such an $x$ is provided in the accompanying files).
For the given $x$ we check that $d_M^z = 8$, which is necessary for $z \in 7O$.
It could be the case that $z \in 7F$; however, by checking that $H:=\langle x,y \rangle$ is irreducible and contains an element of order $13$, by Lemma \ref{e72H}, $H=G$ and $(x,y,z)$ is a Hurwitz generating triple (and hence admissible).
Thus $z\in 7O$, completing the proof.
\end{proof}

\section*{Acknowledgement}
Part of this research was conducted at the Perth campus of the University of Western Australia where the author was supported by ARC Grant DP140100416.
The author acknowledges that the Perth campus is situated on Whadjuk Noongar boodja and pays his respects to Noongar Elders past and present.
The author also wishes to thank Alastair Litterick and David Craven for many helpful conversations in preparation of this paper.

\bibliographystyle{amsplain}
\providecommand{\bysame}{\leavevmode\hbox to3em{\hrulefill}\thinspace}
\providecommand{\MR}{\relax\ifhmode\unskip\space\fi MR }
\providecommand{\MRhref}[2]{%
  \href{http://www.ams.org/mathscinet-getitem?mr=#1}{#2}
}
\providecommand{\href}[2]{#2}

\end{document}